\newcommand{\argmax}{\mathop{\rm arg~max~}\limits}
\newcommand{\PAR}[1]{{\left( #1 \right)}}
\newcommand{\nn}{\nonumber}
\newcommand{\diag}[1]{{\rm diag}\PAR{#1}}
\newtheorem{theorem}{Theorem}
\newtheorem{prob}{Problem}
\newtheorem{lemma}{Lemma}
\newcommand{\N}{\mathbb{N}}
\newcommand{\R}{\mathbb{R}}
\newcommand{\Tr}[1]{\mathrm{Tr}\left(#1\right)}
\newcommand{\any}{\quad^{\forall}}
\newcommand{\pd}{\partial}
\newcommand{\Dl}[2]{\frac{\pd #1}{\pd #2}}
\newcommand{\st}{\mathrm{s.t.}\text{ ~ ~}}
\newcommand{\V}{\mathcal{V}}
\newcommand{\I}{\mathcal{I}}
\begin{document}
\begin{frontmatter}
%
\title{Matrix Pontryagin principle approach to controllability metrics maximization under sparsity constraints\thanksref{footnoteinfo}}

\thanks[footnoteinfo]{This work was supported in part by JSPS KAKENHI under Grant Number JP18H01461 and 	JP21H04875.}

\author[kuamp]{Tomofumi Ohtsuka (Kyoto University),} 
\author[Kitakyu]{Takuya Ikeda (The University of Kitakyushu),} 
\author[kuamp]{Kenji Kashima (Kyoto University)} 

\address[kuamp]{Graduate School of Informatics, Kyoto University, Kyoto, Japan.\\\ (e-mail: otsuka.tomofumi@bode.amp.i.kyoto-u.ac.jp, kk@i.kyoto-u.ac.jp)}
\address[Kitakyu]{Faculty of Environmental Engineering, The University of Kitakyushu, Fukuoka, Japan. (e-mail: t-ikeda@kitakyu-u.ac.jp)}

%
%


\maketitle

\begin{abstract}
Controllability maximization problem under sparsity constraints is a node selection problem that selects inputs that are effective for control in order to minimize the energy to control for desired state. In this paper we discuss the equivalence between the sparsity constrained controllability metrics maximization problems and their convex relaxation. The proof is based on the matrix-valued Pontryagin maximum principle applied to the controllability Lyapunov differential equation.
\end{abstract}

\begin{keyword}%
Sparse optimal control, node selection problem, controllability maximization
\end{keyword}

\end{frontmatter}

\section{Introduction}
Sparse optimal problems have attracted a lot of attention in the field of optimal control. Such an approach is useful to find a small number of essential information that is closely related to the control performance of interest, and it is applied widely, for example, \cite{rebalancing}.
This paper investigates the application of sparse optimization to controllability maximization problem, one of the control node selection problems.
The problem is known as the optimization problem minimizing the energy to control for the desired state.  


These problems are generally formulated as maximization of some metric of the controllability Gramian with $L^0/l^0$ constraints, but it is known that the problems include combinatorial structures.
To circumvent this, relaxed problems, where the $L^0/l^0$ norms are replaced by the $L^1/l^1$ norms, are considered for its computational tractability. 
Then, the problem is how to prove the equivalence between the main problem and its relaxation.
The paper \cite{trace} proved the equivalence when the trace of controllability Gramian is adopted as the metric, but its usefulness as a metric is questionable since the designed Gramian may include the zero eigenvalue, so the trace metric does not automatically ensure the controllability. 
The paper \cite{approximation} considered the minimum eigenvalue and the determinant of the controllability Gramian which is useful as metrics, but it avoided the proof of equivalence because of the difficulty and treated approximation problems that are easy to prove the equivalence.
In view of this, this paper newly proposes a method to prove the equivalence for general metrics of controllability. 
Specifically, we adopted the controllability Lyapunov differential equation.
The controllability Lyapunov differential equation is a matrix-valued differential equation whose solution is the controllability Gramian.
By considering the optimal control problem for this Lyapunov differential equation, we can strictly treat useful metrics that are related to the controllability Gramian.

The remainder of this paper is organized as follows. Section \ref{sec:preliminary} provides mathematical preliminaries. Section \ref{sec:analysis} formulates our node scheduling problem using controllability Lyapunov differential equation, and gives a sufficient condition for the main problem to boil down to the relaxation problem. Section \ref{sec:conclusion} offers concluding remarks.

\subsection*{Notation}\label{notation}
We denote the set of all positive integers by $\N$ and the set of all real numbers by $\R$. 
Let $n, m\in\N$.
We denote the zero matrix of size $n\times m$ by $O_{n\times m}$ and the identity matrix of size $n$ by $I_n$.
For any $A,B\in\R^{n\times m}$, 
we denote the Frobenius norm of $A$ by $\|A\|\triangleq\left(\sum_{i=1}^n\sum_{j=1}^m A_{i,j}^2\right)^{1/2}$, and the inner product of $A$ and $B$ by $(A,B)\triangleq\left(\sum_{i=1}^n\sum_{j=1}^m A_{i,j} B_{i,j}\right)$.
Let $C$ be a closed subset of $\R^{n\times m}$ and $A\in C$. A matrix $\Delta\in\R^{n\times m}$ is a \textit{proximal normal} to the set $C$ at the point $A$ if and only if there exists a constant $\sigma\geq0$ such that $(\Delta,B-A)\leq\sigma\|B-A\|^2$ for all $B\in C$. The \textit{proximal normal cone} to $C$ at $A$ is defined as the set of all such $\Delta$, which is denoted by $N_C^P(A)$. We denote the \textit{limiting normal cone} to $C$ at $A$ by $N_C^L(A)$, i.e., $N_C^L(A)\triangleq\{\Delta=\lim_{i\rightarrow\infty} \Delta_i:\Delta_i\in N_C^P(A_i),A_i\rightarrow A,A_i\in C\}$.
For other notations, see~\cite[Section~II]{approximation}.

\section{Preliminary}
\label{sec:preliminary}
Let us consider the following continuous-time linear system
\begin{equation}\label{eq:system}
    \begin{split}
        &\dot{x}(t)=Ax(t)+BV(t)u(t), \quad t\in[0,T],\\
        &V(t)=\diag{v(t)},\quad v(t)\in\{0,1\}^p,
    \end{split}
\end{equation}
where $x(t)\in \R^n$ is the state vector consisting of $n$ nodes, where $x_i(t)$ is the state of the $i$-th node at time $t$; $u(\cdot)\in \R^m$ is the exogenous control input that influences the network dynamics.
Then the controllability Gramian for the system is defined by
\begin{equation}\label{eq:Gramian}
    G_c=\int_0^T e^{A(T-\tau)}BV(\tau)V(\tau)^\top B^\top e^{A^\top(T-\tau)}d\tau.
\end{equation}
We next show why the controllability Gramian is used as the metric of the ease of control. We here recall the minimum-energy control problem:
\begin{equation}\label{eq:unit energy}
    \begin{split}
        \min_u \quad&\int_0^T \|u(t)\|^2dt \\
        \st& \dot{x}(t)=Ax(t)+BV(t)u(t),\\
        & x(0)=0,\quad x(T)=x_f.
    \end{split}
\end{equation}
The minimum control energy is then given by $x_f^\top G_c^{-1}x_f$ (\cite{minimum_unit_energy}). Based on this, recent works have been considered to make $G_c$ as large as possible.
In this paper we design $BV(t)$ in order to maximize some metric of the controllability Gramian.
As the constraints, we introduce $L^0$ and $l^0$ constraints on $v(t)$ to take account of the upper bound of the total time length of node activation and the number of activated nodes at each time.
We consider the following optimal problem that maximizes some metric of $G_c$ under sparsity constraints:
\begin{equation}\label{prob:max_k}
\begin{split}
    \max_v \quad&J(v)=K(G_c) \\
    \st& v(t)\in\{0,1\}^p \any t\in[0,T],\\
    &\|v_j\|_{L^0}\leq \alpha_j \any j\in\{1,2,\dots,p\},\\
    &\|v(t)\|_{l^0}\leq \beta \any t\in[0,T],
\end{split}
\end{equation}
where $K(G_c)$ is a metric of the controllability Gramian, and $\alpha_j>0$ and $\beta>0$ is constant.

Since the maximization problem in (\ref{prob:max_k}) is a combinatorial optimization problem, we consider the following relaxation problem:
\begin{equation}\label{prob:max_k_relaxed}
\begin{split}
    \max_v \quad&J(v)=K(G_c) \\
    \st& v(t)\in [0,1]^p \any t\in[0,T],\\
    &\|v_j\|_{L^1}\leq \alpha_j \any j\in\{1,2,\dots,p\},\\
    &\|v(t)\|_{l^1}\leq \beta \any t\in[0,T].
\end{split}
\end{equation}
This problem is easier to treat than the main problem (especially if $K$ is concave, problem (\ref{prob:max_k_relaxed}) is a convex optimization problem). 
We, however, have to consider the equivalence between the main problem and the corresponding relaxation problem.
\cite{approximation} formulated alternative approximation problem instead of proving the equivalence.
Then this paper proves the equivalence between the main problem and the relaxed one
by using the controllability Lyapunov differential equation.

\section{Proposed method}\label{sec:analysis}

In this section, we formulate a controllability Lyapunov differential equation which holds the controllability Gramian as a solution, and then formulate an optimization problem for a system in which the state space representation is given by the derived differential equation.
We provide an equivalence theorem between the main problem and the corresponding relaxation problem.

\subsection{Problem formulation and relaxation}\label{sec:formulation}

Controllability Lyapunov differential equation is given as follows:
\begin{equation}\label{eq:controllability Lyapunov}
\begin{split}
    &\dot{G_c}(t)=AG_c(t)+G_c(t)A^\top +B V(t)V(t)^\top B^\top,\\
    &G_c(0)=O_{n\times n}.
\end{split}
\end{equation}
Then the controllability Gramian $G_c$ defined by~\eqref{eq:Gramian} corresponds to the solution $G_c(T)$ of (\ref{eq:controllability Lyapunov}) at $t=T$. 
Here we consider the following optimal control problem.
\begin{prob}[Main problem]\label{prob:main}
\begin{equation}
\begin{split}
    \max_v \quad&J(v)=K(G_c(T)) \\
    \st&\dot{G_c}(t)=AG_c(t)+G_c(t)A^\top +B V(t)B^\top,\\ &G_c(0)=O_{n\times n},\\
    & v(t)\in\{0,1\}^p \any t\in[0,T],\\
    &\|v_j\|_{L^0}\leq \alpha_j \any j\in\{1,2,\dots,p\},\\
    &\|v(t)\|_{l^0}\leq \beta \any t\in[0,T].
\end{split}
\end{equation}
\end{prob}
Note that $V(\cdot)V(\cdot)^\top =V(\cdot)$ since $v(\cdot)\in\{0,1\}^p$, so we rewrite the controllability Lyapunov differential equation.
Problem \ref{prob:main} is a combinatorial optimization problem, so we consider the following relaxed problem, where the $L^0/l^0$ norms are replaced by the $L^1/l^1$ norms, respectively.

\begin{prob}[Relaxed problem]\label{prob:relaxed}
\begin{equation}
\begin{split}
    \max_v \quad&J(v)=K(G_c(T)) \\
    \st&\dot{G_c}(t)=AG_c(t)+G_c(t)A^\top +B V(t) B^\top,\\ &G_c(0)=O_{n\times n},\\
    & v(t)\in[0,1]^p \any t\in[0,T],\\
    &\|v_j\|_{L^1}\leq \alpha_j \any j\in\{1,2,\dots,p\},\\
    &\|v(t)\|_{l^1}\leq \beta \any t\in[0,T].
\end{split}
\end{equation}
\end{prob}
In what follows, we suppose that $K$ is continuously differentiable.

\subsection{discreteness and equivalence}\label{subsec:proof}
We define the set of feasible solutions of Problem \ref{prob:main} and Problem \ref{prob:relaxed} by $\V_0$ and $\V_1$, i.e.,
\begin{align*}
    \V_0\triangleq\{v:&v(t)\in\{0,1\}^p\any t,\quad\|v_j\|_{L^0}\leq\alpha_j\any j,\\
    &\quad \|v(t)\|_{l^0}\leq\beta \any t\},\\
    \V_1\triangleq\{v:&v(t)\in[0,1]^p\any t,\quad\|v_j\|_{L^1}\leq\alpha_j\any j,\\
    &\quad \|v(t)\|_{l^1}\leq\beta \any t\}.
\end{align*}
Note that $\V_0\subset\V_1$, since $\|v_j\|_{L^1}=\|v_j\|_{L^0}$ for all $j$ and $\|v(t)\|_{l^1}=\|v(t)\|_{l^0}$ on $[0,T]$ for any measurable function $v$ with $v(t)\in\{0,1\}^p$ on $[0,T]$.
The inclusion is proper in general, since the $L^1$/$l^1$ constraints do not automatically guarantee the $L^0$/$l^0$ constraints and some functions in $\V_1$ are not obviously binary.
Then, we first show the discreteness of solutions of Problem \ref{prob:relaxed}, which guarantees that the optimal solutions of Problem \ref{prob:relaxed} belongs to the set $\V_0$.
For this purpose, we prepare lemmas.

\begin{lemma}[Matrix Pontryagin principle]\label{lem:maximum principle}
Let us consider the following optimization problem
\begin{equation}
\begin{split}
    \min_U \quad&J=L_f(X(T))\\
    \st& \dot{X}(t)=F(X(t),U(t)),\\ 
    &X(0)=X_0,\quad X(T)\in E,\quad U(t)\in \Omega,
\end{split}\label{prob:OC}
\end{equation}
where $L_f$ is continuously differentiable, $F$ is continuous, $D_X F(X(t),U(t))$ is continuous with respect to $t,X,U$, $X(t)\in\R^{n\times m}$, $U(t)\in\R^{p\times q}$, $X_0\in\R^{n\times m}$, $T>0$, $E\subset\R^{n\times m}$, and $\Omega\subset\R^{p\times q}$. 
Note that $(L_f,F,X_0,T,E,\Omega)$ is given.
We define Hamiltonian function $H:\R^{n\times m}\times\R^{n\times m}\times\R^{p\times q}\to\R$ associated to problem (\ref{prob:OC}) by
\begin{equation}
    H(X(t),P(t),U(t))=\Tr{P(t)^\top F(X(t),U(t))}.
\end{equation}
Let the process $(X^*(t),V^*(t))$ be a local minimizer for the problem (\ref{prob:OC}).
Then there exists a matrix $P:[0,T]\rightarrow\R^{n\times m}$, and a scalar $\eta$ equal to 0 or 1 satisfying the following conditions:
\begin{itemize}
    \item the nontriviality condition:
    \begin{equation}
        (\eta,P(t))\neq0\any t\in[0,T],\label{condition:nontrivial}
    \end{equation}
    \item the transversality condition:
    \begin{equation}
        -P(T)\in\eta\nabla L_f(X^*(T))+N_E^L(X^*(T)),\label{condition:transversality}
    \end{equation}
    \item the adjoint equation for almost every $t\in[0,T]$:
    \begin{equation}
        -\dot{P}(t)=D_X H(X^*(t),P(t),U^*(t)),\label{condition:adjoint}
    \end{equation}
    \item the maximum condition for almost every $t\in[0,T]$:
    \begin{equation}
        H(X^*(t),P(t),U^*(t))=\sup_{U\in\Omega} H(X^*(t),P(t),U).\label{condition:maximize}
    \end{equation}
\end{itemize}
\end{lemma}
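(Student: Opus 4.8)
The plan is to reduce the matrix-valued problem \eqref{prob:OC} to a classical vector-valued optimal control problem by vectorization, and then to invoke a standard nonsmooth Pontryagin maximum principle (for instance the version whose transversality condition is stated through the limiting normal cone) on the vectorized problem. The crucial observation is that the operator $\mathrm{vec}:\R^{n\times m}\to\R^{nm}$ stacking the columns of a matrix is a linear bijection that carries the Frobenius inner product to the Euclidean inner product, $(A,B)=\Tr{\trans{A}B}=\trans{\mathrm{vec}(A)}\mathrm{vec}(B)$; in particular it is an isometry for the Frobenius norm, and so is $\mathrm{vec}^{-1}$.

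First I would recast all the data of \eqref{prob:OC} under this isometry. Setting $x(t)=\mathrm{vec}(X(t))$, $u(t)=\mathrm{vec}(U(t))$, $f(x,u)=\mathrm{vec}(F(X,U))$, $\ell_f=L_f\circ\mathrm{vec}^{-1}$, and the vectorized constraint sets $e=\mathrm{vec}(E)$, $\omega=\mathrm{vec}(\Omega)$, the dynamics $\dot X=F(X,U)$ become $\dot x=f(x,u)$ with $x(0)=\mathrm{vec}(X_0)$, $x(T)\in e$, $u(t)\in\omega$, and the cost becomes $\ell_f(x(T))$. Because $\mathrm{vec}$ and its inverse are linear, hence smooth, the regularity hypotheses transfer verbatim: $\ell_f$ is continuously differentiable, $f$ is continuous, and $D_x f$ is continuous in $(t,x,u)$. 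The scalar Hamiltonian is preserved as well: writing $p=\mathrm{vec}(P)$ and $h(x,p,u)=\trans{p}f(x,u)$, one has $h(x,p,u)=\Tr{\trans{P}F(X,U)}=H(X,P,U)$.

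Next I would apply the vector maximum principle to obtain a costate $p:[0,T]\to\R^{nm}$ and a multiplier $\eta\in\{0,1\}$ satisfying nontriviality, the inclusion $-p(T)\in\eta\nabla\ell_f(x^*(T))+N_e^L(x^*(T))$, the adjoint equation $-\dot p=D_x h$, and the pointwise maximization of $h$ over $\omega$. Defining $P(t)=\mathrm{vec}^{-1}(p(t))$, each condition pulls back to the asserted matrix form. The chain rule through the linear map $\mathrm{vec}^{-1}$ gives $\nabla\ell_f(x)=\mathrm{vec}(\nabla L_f(X))$ and $D_x h=\mathrm{vec}(D_X H)$, so once the normal-cone correspondence below is in hand the transversality and adjoint conditions acquire exactly their matrix shapes; nontriviality and the maximum condition are immediate, since $\mathrm{vec}$ is an isometric bijection (so $p(t)=0\Leftrightarrow P(t)=0$) and the Hamiltonian value is unchanged.

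The step requiring the most care, and the one I expect to be the main obstacle, is showing that the limiting normal cone is natural with respect to vectorization, namely $N_e^L(x^*(T))=\mathrm{vec}\!\left(N_E^L(X^*(T))\right)$. For the proximal normal cone this follows directly from the definition: if $\Delta\in N_C^P(A)$ with constant $\sigma$, then for every $B\in C$ the inequality $(\Delta,B-A)\leq\sigma\norm{B-A}^2$ reads, after vectorization, $\trans{\mathrm{vec}(\Delta)}(\mathrm{vec}(B)-\mathrm{vec}(A))\leq\sigma\norm{\mathrm{vec}(B)-\mathrm{vec}(A)}^2$, so $\mathrm{vec}(\Delta)\in N_{\mathrm{vec}(C)}^P(\mathrm{vec}(A))$ with the same $\sigma$, and the converse is identical through $\mathrm{vec}^{-1}$. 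Since $\mathrm{vec}$ is a homeomorphism it commutes with the limiting operation defining $N^L$ from $N^P$, which yields the desired equality. Combining this with the translations above gives precisely the four stated conditions and completes the reduction.
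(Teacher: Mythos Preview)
Your proposal is correct and follows essentially the same route as the paper: vectorize via $\mathrm{vec}$ (the paper's $\psi_{nm}$), invoke the standard vector Pontryagin principle, and pull the four conditions back through the inner-product-preserving linear bijection. If anything, your treatment is more complete, since you explicitly verify that $N_E^L$ is natural under the isometry $\mathrm{vec}$, a point the paper leaves implicit.
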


\begin{proof}
We define a mapping $\psi_{nm}:\R^{n\times m}\to\R^{nm}$ by
\begin{equation}\label{psi}
    \psi_{nm}(X)=
    \begin{bmatrix} X_1^\top, \dots, X_m^\top \end{bmatrix}^\top,
\end{equation}
where $X_i\in\R^n$ denotes the $i$th column of a matrix $X\in\R^{n\times m}$.
From \cite{matrix_minimum_principle}, $\psi_{nm}$ is a regular linear mapping (hence $\psi_{nm}^{-1}$ exists), and preserves the inner product. 
Then problem (\ref{prob:OC}) is equivalent to
\begin{equation}\label{prob:vector}
\begin{split}
    \min_u \quad&J=l_f(x(T))\\
    \st& \dot{x}(t)=f(x(t),u(t)), \\
    &x(0)=x_0\quad x(T)\in e,\quad u(t)\in \omega, 
\end{split}
\end{equation}
where $x=\psi_{nm}(X)$, $u=\psi_{pq}(U)$, and $l_f,f,x_0,e,\omega$ corresponds to $L_f,F,X_0,E,\Omega$ respectively.
We define the Hamiltonian function $h:\R^{nm}\times\R^{nm}\times\R^{pq}\to\R$ associated to problem (\ref{prob:vector}) by
    $h(x(t),p(t),u(t))=p(t)^\top f(x(t),u(t))$
and denote the local minimizer for problem (\ref{prob:vector}) by ($x^*(t)$, $u^*(t)$).
Then there exists an arc $p:[0,T]\rightarrow\R^{nm}$ and a scalar $\eta$ equal to 0 or 1 satisfying the following conditions (Pontryagin's Maximum Principle (\cite{maximum_principle})):
\begin{itemize}
    \item the nontriviality condition:
    \begin{equation}
        (\eta,p(t))\neq0\any t\in[0,T],
    \end{equation}
    \item the transversality condition:
    \begin{equation}
        -p(T)\in\eta\nabla l_f(x^*(T))+N_e^L(x^*(T)),
    \end{equation}
    \item the adjoint equation for almost every $t\in[0,T]$
    \begin{equation}
        -\dot{p}(t)=D_x h(x^*(t),p(t),u^*(t)),
    \end{equation}
    \item the maximum condition for almost every $t\in[0,T]$
    \begin{equation}
        h(x^*(t),p(t),u^*(t))=\sup_{u\in\omega} h(x^*(t),p(t),u).
    \end{equation}
\end{itemize}
Since $\psi_{nm}^{-1}$ exists,
we obtain the Hamiltonian function associated to $h(x^*(t),p(t),u^*(t))$ as follows:
\begin{equation}
    H(X^*(t),P(t),U^*(t))= \Tr{P^\top(t) F(X^*(t),U^*(t))},
\end{equation}
which satisfies (\ref{condition:nontrivial}), (\ref{condition:transversality}), (\ref{condition:adjoint}), and (\ref{condition:maximize}),
where $X^*=\psi_{nm}^{-1}(x^*)$, $U^*=\psi_{pq}^{-1}(u^*)$, $P=\psi_{nm}^{-1}(p)$. This completes the proof.
\end{proof}

\begin{lemma}\label{lem:cone}
Define a set
\begin{equation}
    E\triangleq\{A\in\R^{n\times n}:A_{i,j}\leq\alpha_{i,j},\quad (i,j)\in\I\},
\end{equation}
and fix any $\gamma\in E$, where $\I\subset\N^{n\times n}$ is a set of positions of elements of $A$ for which inequality constraints are given. Then any $\delta\in N_E^L(\gamma)$ satisfies
\begin{align}
    &\delta_{i,j}(\gamma_{i,j}-\alpha_{i,j})=0 \any (i,j)\in\I,\label{eq:cone1}\\
    &\delta_{i,j}\geq 0 \any (i,j)\in\I,\label{eq:cone2}\\
    &\delta_{i,j}=0 \any (i,j)\notin\I.\label{eq:cone3}
\end{align}
\end{lemma}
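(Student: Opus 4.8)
The plan is to compute the proximal normal cone $N_E^P(\gamma')$ at an arbitrary point $\gamma' \in E$, verify that every proximal normal there already obeys the three stated conditions (with $\gamma'$ in place of $\gamma$), and then pass to the limiting cone by observing that those conditions carve out a closed set. Two structural features of $E$ drive the argument: $E$ is convex, being cut out by the linear inequalities $A_{k,l} \leq \alpha_{k,l}$ for $(k,l) \in \I$, and it is a Cartesian product over the entries, with each constrained entry confined to a half-line and each unconstrained entry ranging over all of $\R$.

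For the main step, I would first use convexity to replace the quadratic proximal inequality by a linear one. If $\delta' \in N_E^P(\gamma')$, so that $(\delta', B - \gamma') \leq \sigma \| B - \gamma' \|^2$ for all $B \in E$ and some $\sigma \geq 0$, then testing along the segment $\gamma' + t(B - \gamma') \in E$, $t \in (0,1]$, and letting $t \to 0^+$ gives $(\delta', B - \gamma') \leq 0$ for every $B \in E$. Expanding this as $\sum_{(k,l)} \delta'_{k,l}(B_{k,l} - \gamma'_{k,l}) \leq 0$ and testing against matrices $B$ differing from $\gamma'$ in a single entry decouples the condition coordinatewise: an unconstrained entry ($(k,l) \notin \I$) may be perturbed in both directions, forcing $\delta'_{k,l} = 0$; a constrained entry admits only perturbations with $B_{k,l} \leq \alpha_{k,l}$, forcing $\delta'_{k,l} \geq 0$, and when the constraint is slack ($\gamma'_{k,l} < \alpha_{k,l}$) both signs are again available, forcing $\delta'_{k,l} = 0$. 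This yields \eqref{eq:cone3}, \eqref{eq:cone2}, and the complementary-slackness identity \eqref{eq:cone1} at $\gamma'$.

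It remains to pass from $N_E^P$ to $N_E^L(\gamma)$. Read jointly in the pair $(\gamma', \delta')$, conditions \eqref{eq:cone1}--\eqref{eq:cone3} define a closed subset of $\R^{n\times n} \times \R^{n\times n}$: the sign and vanishing constraints \eqref{eq:cone2}, \eqref{eq:cone3} are closed, and the product $\delta'_{k,l}(\gamma'_{k,l} - \alpha_{k,l})$ in \eqref{eq:cone1} is continuous in $(\gamma', \delta')$. For $\delta \in N_E^L(\gamma)$ there are, by definition, $\gamma_i \to \gamma$ in $E$ and $\delta_i \to \delta$ with $\delta_i \in N_E^P(\gamma_i)$; by the previous step each pair $(\gamma_i, \delta_i)$ lies in this closed set, so the limit $(\gamma, \delta)$ does too, giving \eqref{eq:cone1}--\eqref{eq:cone3} for $\delta$.

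The only genuinely nontrivial point is the convexity reduction from the proximal (quadratic) to the linear normal inequality; after that the argument is a coordinatewise inspection together with a continuity/closedness observation. One could alternatively bypass this step by invoking the standard fact that for a convex set the proximal, regular, and convex normal cones all coincide.
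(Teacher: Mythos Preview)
Your proof is correct, but it proceeds differently from the paper's. The paper's argument is a two-line reduction: it applies the vectorization map $\psi_{nn}$ to recast $E\subset\R^{n\times n}$ as a set $e\subset\R^{n^2}$ cut out by coordinate half-space constraints, then simply cites the corresponding vector result from~\cite{approximation} and pulls back via $\psi_{nn}^{-1}$. You instead compute $N_E^P$ directly from the definition, using convexity of $E$ to upgrade the quadratic proximal inequality to the linear one $(\delta',B-\gamma')\leq 0$, decouple coordinatewise, and then pass to $N_E^L$ by a closedness argument. Your route is more self-contained and makes the complementary-slackness structure transparent without appealing to an external reference; the paper's route is shorter on the page but defers the substantive work to the cited paper. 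Either is fine, and indeed your argument is essentially what the cited reference does in the vector setting.
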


\begin{proof}
Fix any $\hat{A}\in E$ and $\hat{a}=\psi_{nn}(\hat{A})$, where $\psi_{nn}$ is from (\ref{psi}). 
Then we obtain a set $e$ satisfying $\hat{a}\in e$ as follows:
\begin{equation}
    e\triangleq\{a\in\R^{n^2}:a_j\leq\alpha'_j,\quad j\in\I'\},
\end{equation}
where $\alpha'=\psi_{nn}(\alpha)$ and $\I'\subset\N^{n^2}$ is a set corresponding to $\I$.
Take any $\gamma'\in e$, then we have
\begin{align}
    &\delta'_i (\gamma'_i-\alpha'_i)=0 \any i\in\I',\\
    &\delta'_i \geq 0 \any i\in\I',\\
    &\delta'_i=0 \any i\notin\I'
\end{align}
for all $\delta'\in N_E^L(\gamma')$ (\cite{approximation}).
Finally, we obtain (\ref{eq:cone1}), (\ref{eq:cone2}), (\ref{eq:cone3}) 
where $\delta=\psi_{nn}^{-1}(\delta')$ and $\gamma=\psi_{nn}^{-1}(\gamma')$. 
\end{proof}

\begin{theorem}\label{th:discrete}
    Let $G_c^*(t)$ and $V^*(t)$ be a local optimal solution of Problem \ref{prob:relaxed}. Assume that 
    \begin{equation*}
        q_j(t) \triangleq b_j^\top e^{A^\top(T-t)} \Dl{K(G_c^*(T))}{G_c^*(T)} e^{A(T-t)} b_j
    \end{equation*}
    and $q_i(t) - q_j(t)$ is not constant on $[0,T]$ for all $i,j\in\{1,2,\dots,p\}$. Then any solution to Problem \ref{prob:relaxed} takes only the values in the binary set \{0,1\} almost everywhere.
\end{theorem}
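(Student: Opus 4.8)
The plan is to apply the Matrix Pontryagin principle (Lemma \ref{lem:maximum principle}) to Problem \ref{prob:relaxed}, after rewriting the maximization as minimization of $L_f=-K$ and splitting the constraints into the two slots the lemma provides. The pointwise constraints $v(t)\in[0,1]^p$ and $\|v(t)\|_{l^1}=\sum_j v_j(t)\le\beta$ are genuinely instantaneous, so I would encode them in the control set $\Omega=\{v\in[0,1]^p:\sum_j v_j\le\beta\}$. The budget constraints $\|v_j\|_{L^1}=\int_0^T v_j\,dt\le\alpha_j$ are isoperimetric, so I would remove them by state augmentation: adjoin variables $y_j$ with $\dot y_j=v_j$, $y_j(0)=0$, to the state $G_c$, and replace the integral bounds by the terminal constraint set $E=\{(G,y):y_j\le\alpha_j\ \forall j\}$. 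This $E$ is exactly the template treated in Lemma \ref{lem:cone}, which is why that lemma is prepared in advance.

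Next I would read off the optimality conditions. Since $BV(t)B^\top=\sum_{j} v_j(t)\,b_jb_j^\top$ with $b_j$ the $j$-th column of $B$, the dynamics are affine in $v$, so the Hamiltonian is affine, with control-dependent part $\sum_j v_j\big(b_j^\top P(t)b_j+\lambda_j\big)$, where $P(t)$ is the costate of $G_c$ and $\lambda_j$ that of $y_j$. The adjoint equation for $G_c$ is $-\dot P=A^\top P+PA$, so $P(t)=e^{A^\top(T-t)}P(T)e^{A(T-t)}$; as $G_c$ is terminally unconstrained, the transversality condition gives $P(T)=\eta\,\nabla K(G_c^*(T))$, whence $b_j^\top P(t)b_j=\eta\,q_j(t)$. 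Each $\lambda_j$ is constant ($\dot\lambda_j=0$), and Lemma \ref{lem:cone} applied to $E$ yields $\lambda_j\le0$ with $\lambda_j\,(y_j^*(T)-\alpha_j)=0$. Setting $\phi_j(t)\coloneqq\eta\,q_j(t)+\lambda_j$, the maximum condition makes $v^*(t)$ a maximizer of $\sum_j v_j\phi_j(t)$ over $\Omega$ for a.e. $t$. The abnormal case $\eta=0$ is ruled out at once: it forces $P\equiv0$, so by nontriviality some $\lambda_j<0$, but maximizing $\sum_j\lambda_j v_j$ with all $\lambda_j\le0$ gives $v_j^*\equiv0$ there, hence $y_j^*(T)=0\neq\alpha_j$ (recall $\alpha_j>0$), contradicting complementary slackness. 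Thus $\eta=1$ and $\phi_j(t)=q_j(t)+\lambda_j$.

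It remains to show the instantaneous linear program has a binary maximizer off a null set. Introducing a multiplier $\mu(t)\ge0$ for the budget $\sum_j v_j\le\beta$, coordinatewise optimality forces $v_j^*(t)=1$ when $\phi_j(t)>\mu(t)$ and $v_j^*(t)=0$ when $\phi_j(t)<\mu(t)$; a fractional value can arise only where $\phi_i(t)=\phi_j(t)$ for some $i\neq j$ (a tie at the selection threshold) or, when the budget is slack, where $\phi_j(t)=0$. The decisive point is that each $q_j(t)=b_j^\top e^{A^\top(T-t)}\nabla K(G_c^*(T))e^{A(T-t)}b_j$ is real-analytic in $t$, being built from matrix exponentials; hence $\phi_i-\phi_j$ and $\phi_j$ are real-analytic, and a non-constant real-analytic function vanishes only on a set of measure zero. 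The hypothesis that $q_i-q_j$ is non-constant makes $\phi_i-\phi_j$ non-constant, so each exceptional set is null; the finite union over all pairs still has measure zero, and off it the maximizer is uniquely binary. This gives $v^*(t)\in\{0,1\}^p$ for a.e. $t$, so $v^*\in\V_0$ and the relaxation is exact.

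The hard part will be this last null-set argument. One must verify that ``non-constant'' upgrades to ``nonzero almost everywhere,'' which is precisely what analyticity of $q_j$ buys, and one must treat the coupling introduced by $\sum_j v_j\le\beta$ with care: when $\beta$ is an integer the threshold $\mu(t)$ can be placed strictly between consecutive ordered values of the $\phi_j(t)$, so no coordinate is left fractional except on the null tie-set, whereas without this integrality a marginal coordinate could stay fractional on a set of positive measure. The remaining bookkeeping---the matrix-to-vector transversality through $\psi_{nm}$ and checking that the augmented terminal set matches the form required by Lemma \ref{lem:cone}---is routine.
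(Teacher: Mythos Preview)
Your proposal follows essentially the same route as the paper: augment the state with $y_j=\int_0^\cdot v_j$, apply the matrix Pontryagin principle with $\Omega$ encoding the pointwise constraints and $E$ the $L^1$ budgets, solve the adjoint equation explicitly to get $P(t)=e^{A^\top(T-t)}P(T)e^{A(T-t)}$, use Lemma~\ref{lem:cone} for transversality, rule out $\eta=0$ by the same contradiction with complementary slackness, and then argue that the linear-in-$v$ Hamiltonian forces a bang-bang maximizer off a null set. You in fact supply more than the paper does on the last step---the paper simply asserts that ``from the assumption, it is easy to verify'' the strict ordering and nonvanishing, whereas you invoke real-analyticity of the $q_j$ to upgrade ``non-constant'' to ``zero only on a null set,'' and you correctly flag that integrality of $\beta$ is implicitly needed (the paper writes $\Xi_1(t)=\{j_1(t),\dots,j_\beta(t)\}$ without comment).
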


\begin{proof}
We first reformulate Problem \ref{prob:relaxed} into a form to which Lemma \ref{lem:maximum principle} is applicable. 
The value $\|v_j\|_{L^1}$ is equal to the final state $y_j(T)$ of the system
    $\dot{y_j}(t)=v_j(t)$
with $y_j(0)=0$. 
Define $Y(t)\triangleq\diag{y(t)}$ and matrices $X(t)$, $\bar{V}(t)$, $\bar{A}$, $\bar{B}$ by
\begin{equation}
\begin{split}
    X(t)&\triangleq
    \begin{bmatrix}
        G_c(t) & O_{n\times p}\\O_{p\times n} & Y(t)\\
    \end{bmatrix},\;
    \bar{V}(t)\triangleq
    \begin{bmatrix}
        V(t) & O_{p\times p}\\O_{p\times p} & V(t)\\
    \end{bmatrix},\\
    \bar{A}&\triangleq
    \begin{bmatrix}
        A & O_{n\times p}\\O_{p\times n} & O_{p\times p}\\
    \end{bmatrix},\;
    \bar{B}\triangleq
    \begin{bmatrix}
        B & O_{n\times p}\\O_{p\times p} & I_p\\
    \end{bmatrix}.\nn
\end{split}
\end{equation}
Then, Problem \ref{prob:relaxed} is equivalently expressed as follows:
\begin{equation}\label{prob:rewrite}
\begin{split}
    \min_v \quad&J(V)=-L_f(X(T))\\
    \st&\dot{X}(t)=\bar{A}X(t)+X(t)\bar{A}^\top +\bar{B}\bar{V}(t)\bar{B}^\top,\\
    &X(0)=O_{(n+p)\times (n+p)},\quad X(T)\in E,\\
    &v(t)\in\Omega \any t\in[0,T],
\end{split}
\end{equation}
where $L_f(X(T))=K(G_c(T))$, $E=\{X(T): y_j(T)\leq\alpha_j\any j\in\{1,2,\dots,p\}\}$, $\Omega=\{v(t):v(t)\in[0,1]^p,\|v(t)\|_{l^1}\leq\beta\}$.
This is an optimal control problem to which Lemma \ref{lem:maximum principle} is applicable. 
We define the Hamiltonian function $H$ associated to problem (\ref{prob:rewrite}) by
\begin{equation}
    H(X,P,V)=\Tr{P^\top(\bar{A}X(t)+X(t)\bar{A}^\top +\bar{B}\bar{V}(t)\bar{B}^\top)}.\nn
\end{equation}
We define two matrices as follows:
\begin{align}
    X^*(t)\triangleq
    \begin{bmatrix}
        G_c^*(t) & O_{n\times p}\\
        O_{p\times n} & Y^*(t)
    \end{bmatrix},\quad
    \bar{V}^*(t)\triangleq
    \begin{bmatrix}
        V^*(t) & O_{p\times p}\\
        O_{p\times p} & V^*(t)
    \end{bmatrix}.
\end{align}
Then ($X^*(t)$, $\bar{V}^*(t)$) is the local minimizer of problem (\ref{prob:rewrite}) because of the equivalence between Problem \ref{prob:relaxed} and problem (\ref{prob:rewrite}), and there exists a scalar $\eta$ equal to 0 or 1 and a matrix $P:[0,T]\rightarrow\R^{n\times n}$ satisfying the conditions (\ref{condition:nontrivial}), (\ref{condition:transversality}), (\ref{condition:adjoint}), (\ref{condition:maximize}).
It follows from (\ref{condition:adjoint}) that
\begin{equation}
    -\dot{P}(t)=\bar{A}^\top P(t)+P(t)\bar{A},\nn
\end{equation}
which leads to
\begin{equation}\label{eq:P}
\begin{split}
    P(t)&=e^{\bar{A}^\top(T-t)}P(T)e^{\bar{A}(T-t)}\\
    &=\begin{bmatrix}
        e^{A^\top(T-t)}P^{(11)}(T)e^{A(T-t)} & e^{A^\top(T-t)}P^{(12)}(T)\\
        P^{(21)}(T)e^{A(T-t)} & P^{(22)}(T)
    \end{bmatrix},
\end{split}
\end{equation}
where
\begin{equation}
\begin{split}
    P(t)=
    \begin{bmatrix}
        P^{(11)}(t) & P^{(12)}(t)\\
        P^{(21)}(t) & P^{(22)}(t)\\
    \end{bmatrix}
\end{split}
\end{equation}
with
$P^{(11)}(t)\in\R^{n\times n}$ and $P^{(22)}(t)\in\R^{p\times p}$.
Note that
\begin{equation}
    \begin{bmatrix}
        -P^{(11)}(T)+\eta\Dl{K(G_c^*(T))}{G_c^*(T)} & -P^{(12)}(T)\\
        -P^{(21)}(T) & -P^{(22)}(T)
    \end{bmatrix}\in N_E^L (X^*(T))\nn
\end{equation}
by (\ref{condition:transversality}), then we have
\begin{align}
    &P_{j,j}^{(22)}(T)(y_j(T)-\alpha_j)=0 \quad j=\{1,2,\dots,p\},\\
    &P_{j,j}^{(22)}(T)\leq 0 \quad j=\{1,2,\dots,p\},\\
    &P_{i,j}^{(22)}(T) =0 \any (i,j)\in\{(i,j):i\neq j\},\\
    &-P^{(11)}(T)+\eta\Dl{K(G_c^*(T))}{G_c^*(T)}=O_{n\times n}, \\
    &P^{(12)}(T)=O_{n\times p}, \quad
    P^{(21)}(T)=O_{p\times n},
\end{align}
from Lemma \ref{lem:cone}. Substituting these into (\ref{eq:P}), we get
\begin{equation}
    P(t)=\begin{bmatrix}
        e^{A^\top(T-t)}\eta\Dl{K(G_c^*(T))}{G_c^*(T)}e^{A(T-t)} & O_{n\times p}\\
        O_{p\times n} & P^{(22)}(T)
    \end{bmatrix}.
\end{equation}
Then, we have
\begin{equation}
\begin{split}
    \Tr{P^\top(t)\bar{B}\bar{V}(t)\bar{B}^\top}
    &=\Tr{\bar{B}^\top P^\top(t) \bar{B}\bar{V}(t)}\\
    &=\Tr{\begin{bmatrix}
        B^\top {P^{(11)}}^\top(t)B & O_{p\times p}\\
        O_{p\times p} & P^{(22)}(t)\\
    \end{bmatrix}
    \bar{V}(t)}\\
    &=\sum_{j=1}^p \left(\eta q_j(t)+P_{j,j}^{(22)}(T)\right) v_j(t).
\end{split}\nn
\end{equation}
It follows from (\ref{condition:maximize}) that
\begin{equation}\label{eq:max_v}
    v^*(t) = \argmax_{v\in\Omega} \sum_{j=1}^p \left(\eta q_j(t)+P_{j,j}^{(22)}(T)\right) v_j.
\end{equation}
We here claim that $\eta=1$. Indeed, if $\eta=0$, $P^{(22)}(T)\neq O_{p\times p}$ follows from (\ref{condition:nontrivial}), i.e., there exists some $j$ that satisfies 
\begin{equation}\label{eq:eta0}
    P_{j,j}^{(22)}(T)<0,\quad y_j^*(T)=\alpha_j.
\end{equation}
Hence, from (\ref{eq:max_v}) and (\ref{eq:eta0}), we have $v_j^*(t)=0$ for all $t\in[0,T]$, i.e., $y_j^*(T)=\|v_j^*\|_{L^1}=0$. This contradicts to (\ref{eq:eta0}). Thus, $\eta=1$.
From the assumption, it is easy to verify that
\begin{enumerate}[1)]
    \item we have
        $q_j(t)+P_{j,j}^{(22)}(T)\neq 0$
    almost everywhere for all $j=\{1,2,\dots,p\}$,
    \item there exists $j_k$ $:$ $[0,T]\rightarrow\{1,2,\dots,p\}, k=1,2,\dots,p$, such that
    \[
        q_{j_1(t)}(t)+P_{j_1(t),j_1(t)}^{(22)}(T)
        >\cdots
        > q_{j_p(t)}(t)+P_{j_p(t),j_p(t)}^{(22)}(T)
    \]
    almost everywhere.
\end{enumerate}
Hence, we find
\begin{equation}\label{eq:optimal_v}
    v_j^*(t)=\begin{cases}
    1 \quad \mbox{if} \quad j\in\Xi_1(t)\cap \Xi_2(t),\\
    0 \quad \mbox{otherwise}
    \end{cases}
\end{equation}
for almost every $t\in[0,T]$, where
\begin{align*}
    \Xi_1(t)&\triangleq\{j_1(t),j_2(t),\dots,j_\beta(t)\},\\
    \Xi_2(t)&\triangleq\{k\in\{1,2,\dots,p\}:q_{j_k(t)}(t)+P_{j_k(t),j_k(t)}^{(22)}(T)>0\}.
\end{align*}
This completes the proof.
\end{proof}

The following theorem is the main result, which shows the equivalence between Problem \ref{prob:main} and Problem \ref{prob:relaxed}.

\begin{theorem}[equivalence]\label{th:equivalence}
Assume that $q_j(t)$ and $q_i(t) - q_j(t)$ is not constant on $[0,T]$ for all $i,j\in\{1,2,\dots,p\}$. Denote the set of all solutions of Problem \ref{prob:main} and Problem \ref{prob:relaxed} by $\V_0^*$ and $\V_1^*$, respectively. If the set $\V_1^*$ is not empty, then we have $\V_0^*=\V_1^*$.
\end{theorem}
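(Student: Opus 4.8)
The plan is to reduce the set equality to Theorem \ref{th:discrete} together with the feasible-set inclusion $\V_0\subset\V_1$ and an elementary comparison of optimal values. By ``solution'' I mean a global maximizer, and I work with $v$ up to modification on a Lebesgue-null set; this is harmless because $J$ depends on $v$ only through the integral defining $G_c(T)$, and the $L^1$/$l^1$ constraints are insensitive to null-set changes. Let $M_0$ denote $\sup_{v\in\V_0}J(v)$ and $M_1$ denote $\sup_{v\in\V_1}J(v)$. Since $\V_0\subset\V_1$, we immediately get $M_0\leq M_1$.

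Next I would use the non-emptiness of $\V_1^*$ to promote this inequality to an equality. Choose any $v^*\in\V_1^*$, so that $J(v^*)=M_1$. As a global maximizer, $v^*$ is in particular a local optimal solution of Problem \ref{prob:relaxed}, so Theorem \ref{th:discrete} applies and yields $v^*(t)\in\{0,1\}^p$ for almost every $t\in[0,T]$. For a binary function one has $\|v_j^*\|_{L^1}=\|v_j^*\|_{L^0}$ and $\|v^*(t)\|_{l^1}=\|v^*(t)\|_{l^0}$, so the relaxed constraints $\|v_j^*\|_{L^1}\leq\alpha_j$ and $\|v^*(t)\|_{l^1}\leq\beta$ turn into exactly the $L^0$/$l^0$ constraints of Problem \ref{prob:main}. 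Discarding the null set on which $v^*$ might fail to be binary, we obtain $v^*\in\V_0$, whence $M_0\geq J(v^*)=M_1$. Combined with $M_0\leq M_1$, this gives $M_0=M_1$.

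With the two optimal values equal, the set equality follows by checking both inclusions. For $\V_1^*\subseteq\V_0^*$: any $v^*\in\V_1^*$ belongs to $\V_0$ by the step above and satisfies $J(v^*)=M_1=M_0$, so it is optimal for Problem \ref{prob:main}. For $\V_0^*\subseteq\V_1^*$: any $v\in\V_0^*$ satisfies $v\in\V_0\subset\V_1$ and $J(v)=M_0=M_1$, so it is optimal for Problem \ref{prob:relaxed}. Hence $\V_0^*=\V_1^*$; in particular $\V_0^*$ is non-empty as well.

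I expect no deep obstacle here, since all the substantive work is already carried by Theorem \ref{th:discrete}, whose Pontryagin-principle argument forces every relaxed optimizer to be bang-bang. The only points demanding care are the measure-zero bookkeeping --- that replacing $v^*$ by a binary representative leaves both $J$ and the constraints unchanged, so that the representative genuinely lies in $\V_0$ --- and the verification that the standing hypothesis, namely that $q_j$ and $q_i-q_j$ are non-constant on $[0,T]$, is in force for each optimal solution to which Theorem \ref{th:discrete} is invoked (recall that $q_j$ is defined through the optimal Gramian $G_c^*(T)$).
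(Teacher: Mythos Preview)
Your proposal is correct and follows essentially the same route as the paper: invoke Theorem~\ref{th:discrete} to force any $v^*\in\V_1^*$ to be binary (up to a null set), deduce $v^*\in\V_0$ via $\|v_j^*\|_{L^1}=\|v_j^*\|_{L^0}$ and $\|v^*(t)\|_{l^1}=\|v^*(t)\|_{l^0}$, and then use the sandwich $J(v^*)\le\max_{\V_0}J\le\max_{\V_1}J=J(v^*)$ together with $\V_0\subset\V_1$ to obtain both inclusions. The only cosmetic difference is your explicit introduction of $M_0,M_1$; the null-set bookkeeping you flag is exactly what the paper handles in the same way.
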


\begin{proof}
Denote any solution of Problem \ref{prob:relaxed} by $\hat{v}\in\V_1^*$. 
It follows from Theorem \ref{th:discrete} that $\hat{v}(t)\in\{0,1\}^p$ almost everywhere. 
Note that the null set $\cup_{j=1}^p\{t\in[0,T]:\hat{v}_j(t)\notin\{0,1\}\}$ does not affect the cost, and hence we can adjust the variables so that $\hat{v}(t)\in\{0,1\}^p$ on $[0,T]$, without loss of the optimality. 
We have
\begin{equation}
    \|\hat{v}(t)\|_{l^1}=\|\hat{v}(t)\|_{l^0},\quad \|\hat{v}_j\|_{L^1}=\|\hat{v}_j\|_{L^0}\nn
\end{equation}
for all $j$. 
Since $\hat{v}\in\V_1$, we have $\|\hat{v}(t)\|_{l^0}\leq\beta$ and $\|\hat{v}_j\|_{L^0}\leq\alpha_j$ for all $t$ and $j$.
Thus, $\hat{v}\in\V_0$. Then,
\begin{equation}
    J(\hat{v})\leq\max_{v\in\V_0}J(v)\leq\max_{v\in\V_1}J(v)=J(\hat{v}),
\end{equation}
where the first relation follows from $\hat{v}\in\V_0$,  the second relation follows from $\V_0\subset\V_1$, and the last relation follows from $\hat{v}\in\V_1^*$.
Hence, we have
\begin{equation}\label{j}
    J(\hat{v})=\max_{v\in\V_0}J(v),
\end{equation}
which implies $\hat{v}\in\V_0^*$. 
Hence, $\V_1^*\subset\V_0^*$ and $\V_0^*$ is not empty.

Next, take any $\tilde{v}\in\V_0^*$. Note that $\tilde{v}\in\V_1$, since $\V_0^*\subset\V_0\subset\V_1$. In addition, it follows from (\ref{j}) that $J(\tilde{v})=J(\hat{v})$. Therefore, $\tilde{v}\in\V_1^*$, which implies $\V_0^*\subset\V_1^*$. This gives $\V_0^*=\V_1^*$.
\end{proof}

\section{Conclusion}\label{sec:conclusion}

In this paper, we discussed the equivalence between the sparsity constrained controllability metrics maximization problems and their convex relaxation. The proof is based on the matrix-valued Pontryagin maximum principle applied to the controllability Lyapunov differential equation. The existence of optimal solutions and computational cost are currently under investigation. 



\bibliography{Mybib_sparse} 

\end{document}